\newtheorem{lemma}{Lemma}
\newtheorem{proposition}{Proposition}
\theoremstyle{remark}
\theoremstyle{definition}
\begin{document}

\title[Parameter--elliptic operators on the extended Sobolev scale]
{Parameter--elliptic operators \\on the extended Sobolev scale}

\author[A. Murach]{Aleksandr A. Murach}

\address{Institute of Mathematics, National Academy of Sciences of Ukraine,
3 Tere\-shchen\-kivs'ka, Kyiv, 01601, Ukraine}

\email{murach@imath.kiev.ua}

\thanks{This research was partly supported by grant no. 01/01.12 of
National Academy of Sciences of Ukra\-ine (under the joint Ukrainian--Russian project
of NAS of Ukraine and Russian Foundation of Basic Research).}


\author[T. Zinchenko]{Tetiana Zinchenko}

\address{Institute of Mathematics, National Academy of Sciences of Ukraine,
3 Tere\-shchen\-kivs'ka, Kyiv, 01601, Ukraine}

\email{tzinchenko@imath.kiev.ua}


\subjclass[2000]{Primary 58J05; Secondary 46E35}

\keywords{Parameter--elliptic operator, extended Sobolev scale, H\"ormander space,
RO-varying function, interpolation with function parameter, isomorphism property, a
priory estimate of solutions}

\begin{abstract}
Parameter--elliptic pseudodifferential operators given on a closed smooth manifold
are investigated on the extended Sobolev scale. This scale consists of all Hilbert
spaces that are interpolation spaces with respect to the Hilbert Sobolev scale. We
prove that these operators set isomorphisms between appropriate spaces of the scale
provided the parameter is modulo large enough. For solutions to the corresponding
parameter--elliptic equations, we establish two-sided a priori estimates, in which
the constants are independent of the parameter.
\end{abstract}

\maketitle

\section{Introduction}\label{sec1}

Parameter--elliptic operators occupy a special position in the theory of elliptic
differential equations. These operators are distinguished by the following
fundamental property: if the complex parameter is modulo large enough, then the
elliptic operator defines an isomorphism between appropriate Sobolev spaces, and
moreover the solution of the ellip\-tic equation admits an a priory estimate in
which the constant does not depend on the parameter. Elliptic operators with
spectral parameter are simple and important examples of the operators discussed.
Various classes of parameter-elliptic equations and boundary--value problems were
introduced and investigated in the papers by S.~Agmon \cite{Agmon62}, S.~Agmon and
L.~Nirenberg \cite{AgmonNirenberg63}, M.~S.~Agranovich and M.~I.~Vishik
\cite{AgranovichVishik64}, M.~S.~Agranovich \cite{Agranovich90, Agranovich92},
G.~Grubb \cite[Ch. 2]{Grubb96}, A.~N.~Kozhevnikov [7--10], R.~Denk, R.~Mennicken,
and L.~R.~Volevich \cite{DenkMennickenVolevich98, DenkMennickenVolevich01}, R.~Denk
and M.~Fairman \cite{DenkFairman10} and other papers (also see the surveys
\cite{Agranovich94, Agranovich97} and the references therein). Such classes have
important applications to the spectral theory of elliptic operators, to parabolic
differential equations and other; note that the most significant results are
obtained in the case of Hilbert spaces.

In this connection, of interest is the investigation of parameter--elliptic
operators in classes of Hilbert spaces that are calibrated much finer than the
Sobolev scale. For such classes, a sufficiently general function, not a number
parameter, serves as the smoothness index. Among them, we consider the class of all
Hilbert spaces that are interpolation spaces for the Hilbert Sobolev scale. This
class consists of the H\"ormander spaces $B_{2,k}$ \cite[Sec. 2.2]{Hermander63} for
which the smoothness index $k$ is an arbitrary radial function RO-vary\-ing at
$+\infty$. Such a class is naturally to call the extended Sobolev scale (by means of
inter\-po\-la\-tion spaces); this scale is distinguished and investigated in
\cite{09Dop3} and \cite[Sec. 2.4]{MikhailetsMurach10}. Since the isomorphism and
Fredholm properties of linear operators are preserved under the interpolation of
spaces, the extended Sobolev scale proved to be convenient and efficient in the
theory of general elliptic operators (see \cite{09Dop3, 09UMJ3, 12UMJ11} and
\cite[Sec 2.4.3]{MikhailetsMurach10}).

In this paper we investigate parameter--elliptic pseudodifferential operators given
on a closed smooth manifold and acting on the extended Sobolev scale. Our purpose is
to show that these operators possess the above--mentioned property on this scale.
Namely, we will prove a theorem on isomorphisms realized by a parameter--elliptic
pseudodifferential operator and on a priory estimates of a solution to the
corresponding elliptic equation.

Note that the theory of general elliptic equations and elliptic boundary--value
problems is built for a narrower class of H\"ormander spaces (called the refined
Sobolev scale) by V.~A.~Mikhailets and the second author in series of papers, among
them we mention the articles [21--28], survey \cite{12BJMA2}, and monograph
\cite{MikhailetsMurach10}. Specifically, parameter--elliptic equations are
investigated therein.

Nowadays H\"ormander spaces and their various analogs, called the spaces of
generalized smoothness, are of considerable interest both by themselves and to
applications [30--33].

\section{Statement of the problem}\label{sec2}

Let $\Gamma$ be a closed (i.e. compact and without boundary) infinitely smooth
manifold of dimension $n\geq1$. A certain $C^{\infty}$-density $dx$ is supposed to
be given on $\Gamma$. The linear topological spaces $C^{\infty}(\Gamma)$ of test
functions and $\mathcal{D}'(\Gamma)$ of distributions defined on $\Gamma$ are
considered as antidual spaces with respect to the inner product in
$L_{2}(\Gamma,dx)$. We suppose that functions and distributions are complex-valued,
and interpret distributions as antilinear functionals.

Following \cite[Sec. 4.1]{Agranovich94}, we recall the definition of a
parameter--elliptic pseudodifferential operator on $\Gamma$.

Let $\Psi_{\mathrm{ph}}^{r}(\Gamma)$ denote the class of polyhomogeneous (i.e.
classical) pseudodifferential operators (PsDOs) of order ${r}\in\mathbb{R}$ defined
on the mani\-fold~$\Gamma$. The principal symbol of a PsDO belonging to
$\Psi_{\mathrm{ph}}^{r}(\Gamma)$ is an infinitely smooth and complex-valued function
defined on the cotangent bundle $T^{\ast}\Gamma\setminus0$ (here $0$ is the
zero-section) and being positively homogeneous of the degree $r$ with respect to
$\xi$ in every section $T^{\ast}_{x}\Gamma\setminus\{0\}$, where $x\in\Gamma$. We
admit that the principal symbol can be equal to zero identically, then
$\Psi_{\mathrm{ph}}^{r}(\Gamma)\subset\Psi_{\mathrm{ph}}^{k}(\Gamma)$ whenever
$r<k$. A linear differential operator of order $r\geq1$ given on $\Gamma$ and having
infinitely smooth coefficients is an important special case of a PsDO belonging to
$\Psi_{\mathrm{ph}}^{r}(\Gamma)$. Note that the PsDOs under consideration are linear
and continuous on both topological spaces $C^{\infty}(\Gamma)$ and
$\mathcal{D}'(\Gamma)$.

Let numbers $m>0$ and $q\in\mathbb{N}$ be chosen arbitrarily. We consider a PsDO
$A(\lambda)$ that belongs to $\Psi_{\mathrm{\mathrm{ph}}}^{mq}(\Gamma)$ and depends
on the complex-valued parameter $\lambda$ in the following way:
\begin{equation}\label{f1}
A(\lambda)=\,\sum_{j=0}^{q}\,\lambda^{q-j}\,A_{j}.
\end{equation}
Here $A_{j}\in\Psi_{\mathrm{\mathrm{ph}}}^{mj}(\Gamma)$ for each
$j\in\{0,\ldots,q\}$, and moreover $A_{0}$ is an operator of multiplication by a
function $a_{0}\in C^{\infty}(\Gamma)$. Note that since
$m(q-j)+\mathrm{ord}\,A_{j}=\mathrm{ord}\,A(\lambda)$, the weight $m$ is ascribed to
$\lambda$ in \eqref{f1}.

Let $K$ be a fixed closed angle on the complex plain with the vertex at the origin
(we do not exclude the case where $K$ degenerates into a ray).

The PsDO $A(\lambda)$ is said to be parameter--elliptic in the angle $K$ on the
manifold $\Gamma$ if
\begin{equation}\label{f2}
\sum_{j\,=\,0}^{q}\,\lambda^{q-j}\,a_{j,0}(x,\xi)\neq0
\end{equation}
for each point $x\in\Gamma$, covector $\xi\in T_{x}^{\ast}\Gamma$ and parameter
$\lambda\in K$ such that $(\xi,\lambda)\neq0$. Here $a_{j,0}(x,\xi)$ is the
principle symbol of $A_{j}$, so $a_{0,0}(x,\xi)\equiv a_{0}(x)$. We also admit that
the functions $a_{1,0}(x,\xi),\,a_{2,0}(x,\xi),\ldots$ are equal to zero at $\xi=0$
(this assumption is connected with the fact that the principal symbols are not
initially defined at $\xi=0$).

For instance, let a PsDO be of the form $A-\lambda I$, where
$A\in\Psi_{\mathrm{\mathrm{ph}}}^{m}(\Gamma)$ (as usual $I$ denotes the identical
operator). Then, for $A-\lambda I$, the parameter-ellipticity condition in $K$ means
that $a_{0}(x,\xi)\notin K$ whenever $\xi\neq0$; here $a_{0}(x,\xi)$ is  the
principal symbol of $A$. This example is important in the spectral theory of
elliptic operators.

We investigate properties of the parameter--elliptic PsDO $A(\lambda)$ on the
extended Sobo\-lev scale.

\section{The extended Sobolev scale}\label{sec3}

Following \cite[Sec. 2.4]{MikhailetsMurach10}, we will introduce the spaces that
form the extended Sobolev scale. They are parametrized with a function
$\varphi\in\mathrm{RO}$, which characterizes regularity properties of the
distributions belonging to the space. Here RO is the set of all Borel measurable
functions $\varphi:[1,\infty)\rightarrow(0,\infty)$ for which there exist numbers
$a>1$ and $c\geq1$ such that
\begin{equation}\label{f3}
c^{-1}\leq\frac{\varphi(\lambda t)}{\varphi(t)}\leq c\quad\mbox{for each}\quad
t\geq1\quad\mbox{and}\quad\lambda\in[1,a]
\end{equation}
(generally, the constants $a$ and $c$ depend on $\varphi\in\mathrm{RO}$). These
functions are said to be RO-varying at $+\infty$. The class of RO-varying functions
was introduced by V.~G.~Avakumovi\'c \cite{Avakumovic36} in 1936 and has been
sufficiently investigated \cite{Seneta76, BinghamGoldieTeugels89}.

The class RO is admitted the following description
\begin{equation*}
\varphi\in\mathrm{RO}\quad\Longleftrightarrow\quad\varphi(t)=
\exp\biggl(\beta(t)+\int_{1}^{t}\frac{\gamma(\tau)}{\tau}\;d\tau\biggr), \;\;t\geq1,
\end{equation*}
where the real-valued functions $\beta$ and $\gamma$ are Borel measurable and
bounded on $[1,\infty)$. Note also that condition \eqref{f3} is equivalent to the
bilateral inequality
\begin{equation}\label{f4}
c^{-1}\lambda^{s_{0}}\leq\frac{\varphi(\lambda t)}{\varphi(t)}\leq
c\lambda^{s_{1}}\quad\mbox{for each}\quad t\geq1\quad\mbox{and}\quad\lambda\geq1,
\end{equation}
in which (another) constant $c\geq1$ is independent of $t$ and $\lambda$. Hence, for
every function $\varphi\in\mathrm{RO}$, we may define the lower and the upper
Matuszewska indices \cite{Matuszewska64} as follows:
\begin{gather}\label{eq6}
\sigma_{0}(\varphi):=\sup\{s_{0}\in\mathbb{R}:\,\mbox{the left-hand inequality in
\eqref{f4} holds}\},\\ \label{eq7}
\sigma_{1}(\varphi):=\inf\{s_{1}\in\mathbb{R}:\,\mbox{the right-hand inequality in
\eqref{f4} holds}\}
\end{gather}
(see \cite[Theorem 2.2.2]{BinghamGoldieTeugels89}); here
$-\infty<\sigma_{0}(\varphi)\leq\sigma_{1}(\varphi)<\infty$.

Now let $\varphi\in\mathrm{RO}$ and introduce the necessary function spaces over
$\mathbb{R}^{n}$ and then over~$\Gamma$.

The linear space $H^{\varphi}(\mathbb{R}^{n})$ is defined to consists of all
distributions $w\in\mathcal{S}'(\mathbb{R}^{n})$ such that their Fourier transform
$\widehat{w}:=\mathcal{F}w$ is locally Lebesgue integrable over $\mathbb{R}^{n}$ and
satisfies the condition
$$
\int_{\mathbb{R}^{n}}\varphi^2(\langle\xi\rangle)\,|\widehat{w}(\xi)|^2\,d\xi
<\infty.
$$
Here, as usual, $\mathcal{S}'(\mathbb{R}^{n})$ is the linear topological space of
tempered distributions given in $\mathbb{R}^{n}$, and
$\langle\xi\rangle:=(1+|\xi|^{2})^{1/2}$ is the smoothed modulus of
$\xi\in\mathbb{R}^{n}$. The inner product in $H^{\varphi}(\mathbb{R}^{n})$ is
defined by the formula
$$
(w_1,w_2)_{H^{\varphi}(\mathbb{R}^{n})}:=
\int_{\mathbb{R}^{n}}\varphi^2(\langle\xi\rangle)\,
\widehat{w_1}(\xi)\,\overline{\widehat{w_2}(\xi)}\,d\xi.
$$
It endows $H^{\varphi}(\mathbb{R}^{n})$ with the Hilbert space structure and induces
the norm
$\|w\|_{H^{\varphi}(\mathbb{R}^{n})}:=(w,w)_{H^{\varphi}(\mathbb{R}^{n})}^{1/2}$.

The space $H^{\varphi}(\mathbb{R}^{n})$ is a Hilbert and isotropic case of the
spaces $B_{p,k}$ introduced and systematically investigated by L.~H\"ormander
\cite[Sec. 2.2]{Hermander63} (also see \cite[Sec. 10.1]{Hermander83}). Namely,
$H^{\varphi}(\mathbb{R}^{n})=B_{p,k}$ provided $p=2$ and
$k(\xi)=\varphi(\langle\xi\rangle)$ for all $\xi\in\mathbb{R}^{n}$. Not that, if
$p=2$, then the H\"ormander spaces coincide with the spaces introduced and
investigated by L.~R.~Volevich and B.~P.~Paneah \cite[Sec.~2]{VolevichPaneah65}.

To define an analog of $H^{\varphi}(\mathbb{R}^{n})$ for the manifold $\Gamma$,
choose a finite atlas belonging to the $C^{\infty}$-structure on $\Gamma$. Let this
atlas consist of local charts $\alpha_{j}:\mathbb{R}^{n}\leftrightarrow\Gamma_{j}$,
$j=1,\ldots,p$, where the open sets  $\Gamma_{j}$ form a finite covering of
$\Gamma$. Also choose functions $\chi_{j}\in C^{\infty}(\Gamma)$, $j=1,\ldots,p$,
that satisfy the condition $\mathrm{supp}\,\chi_{j}\subset\Gamma_{j}$ and that form
a partition of unity on $\Gamma$.

The linear space $H^{\varphi}(\Gamma)$ is defined to consist of all distributions
$u\in\nobreak\mathcal{D}'(\Gamma)$ such that $(\chi_{j}u)\circ\alpha_{j}\in
H^{\varphi}(\mathbb{R}^{n})$ for every $j\in\{1,\ldots,p\}$. Here
$(\chi_{j}u)\circ\alpha_{j}$ is the representation of the distribution $\chi_{j}u$
in the local chart $\alpha_{j}$. The inner product in $H^{\varphi}(\Gamma)$ is
defined by the formula
$$
(u_{1},u_{2})_{\varphi}:=\sum_{j=1}^{p}\,((\chi_{j}u_{1})\circ\alpha_{j},
(\chi_{j}u_{2})\circ\alpha_{j})_{H^{\varphi}(\mathbb{R}^{n})},
$$
where $u_{1},u_{2}\in H^{\varphi}(\mathbb{R}^{n})$. This inner product endows
$H^{\varphi}(\mathbb{R}^{n})$ with the Hilbert space structure and induces the norm
$\|u\|_{\varphi}:=(u,u)_{\varphi}^{1/2}$.

The Hilbert space $H^{\varphi}(\Gamma)$ does not depend (up to equivalence of norms)
on our choice of local charts and partition of unity on~$\Gamma$ \cite[Theorem
2.21]{MikhailetsMurach10}. This space is separable, and the continuous and dense
embeddings $C^{\infty}(\Gamma)\hookrightarrow
H^{\varphi}(\Gamma)\hookrightarrow\mathcal{D}'(\Gamma)$ hold.

If $\varphi(t)=t^{s}$ for each $t\geq1$ with some $s\in\mathbb{R}$, then
$H^{\varphi}(\mathbb{R}^{n})=:H^{(s)}(\mathbb{R}^{n})$ and
$H^{\varphi}(\Gamma)=:H^{(s)}(\Gamma)$ are the inner product Sobolev spaces (of the
differentiation order $s$) given over $\mathbb{R}^{n}$ and $\Gamma$ respectively.

The class of Hilbert function spaces
$$
\{H^{\varphi}(\mathbb{R}^{n}\;\mbox{or}\;\Gamma):\varphi\in\mathrm{RO}\}
$$
is naturally said to be the extended Sobolev scale over $\mathbb{R}^{n}$ or
$\Gamma$.

We mention some properties of the extended Sobolev scale on $\Gamma$ connected with
embedding of spaces. Let $\varphi,\varphi_{1}\in\mathrm{RO}$; the function
$\varphi(t)/\varphi_{1}(t)$ is bounded on a neighbourhood of $+\infty$ if and only
if $H^{\varphi_{1}}(\Gamma)\hookrightarrow H^{\varphi}(\Gamma)$. This embedding is
continuous and dense; moreover, it is compact if and only if
$\varphi(t)/\varphi_{1}(t)\rightarrow0$ as $t\rightarrow+\infty$. Specifically, the
following compact and dense embeddings hold:
\begin{equation}\label{f7}
H^{(s_1)}(\Gamma)\hookrightarrow H^{\varphi}(\Gamma)\hookrightarrow
H^{(s_0)}(\Gamma)\quad\mbox{for each}\quad
s_{1}>\sigma_{1}(\varphi)\quad\mbox{and}\quad s_{0}<\sigma_{0}(\varphi).
\end{equation}
This properties result from the corresponding properties of the H\"ormander spaces
$B_{2,k}$ \cite[Sec. 2.2]{Hermander63}.

\section{The main result}\label{sec4}

Put $\varrho(t):=t$ for $t\geq1$. The PsDO $A(\lambda)$, which order is $mq$,
defines the bounded operator
\begin{equation}\label{f8}
A(\lambda):\,\,H^{\varphi\varrho^{mq}}(\Gamma)\,\rightarrow\,H^{\varphi}(\Gamma)
\quad\mbox{for each}\quad\lambda\in\mathbb{C}\quad\mbox{and}
\quad\varphi\in\mathrm{RO}.
\end{equation}
This fact will be proved in Section \ref{sec6}. Note here that
$\varphi\rho^{mq}\in\mathrm{RO}$, and therefore operator \eqref{f8} acts on the
extended Sobolev scale.

The main result of the paper is the following.

\medskip

\noindent\textbf{Theorem.} \it Suppose that the PsDO $A(\lambda)$ is
parameter--elliptic in the corner $K\subset\mathbb{C}$ on the manifold $\Gamma$.
Then there exists a number $\lambda_{0}>0$ such that for every $\lambda\in K$ and
$\varphi\in\mathrm{RO}$ we have the isomorphism
\begin{equation}\label{f9}
A(\lambda):\,\,H^{\varphi\varrho^{mq}}(\Gamma)\,\leftrightarrow\,H^{\varphi}(\Gamma)
\quad\mbox{whenever}\quad|\lambda|\geq\lambda_{0}.
\end{equation}
Moreover, for each fixed $\varphi\in\mathrm{RO}$ there exists a number
$c=c(\varphi)\geq1$ such that
\begin{equation}\label{f10}
c^{-1}\,\|A(\lambda)u\|_{\varphi}\leq\bigl(\,\|u\|_{\varphi\varrho^{mq}}+
|\lambda|^{q}\,\|\,u\|_{\varphi}\,\bigr)\leq c\,\|A(\lambda)u\|_{\varphi}
\end{equation}
for every $\lambda\in K$, with $|\lambda|\geq\lambda_{0}$, and all $u\in
H^{s+mq,\varphi}(\Gamma)$. Here the number $c$ does not depend on $\lambda$ and $u$.
\rm

\medskip

This theorem is known in the Sobolev case, where $\varphi(t)\equiv t^{s}$ and
$s\in\mathbb{R}$ (see \cite[Theorem 4.1.2]{Agranovich94}). We will prove Theorem for
arbitrary $\varphi\in\mathrm{RO}$ in Section~\ref{sec7} by applying interpolation
with function parameter.

Note that the left-hand inequality in \eqref{f10} remains true without the
parameter--ellip\-ti\-city assumption (see Lemma~\ref{lem2} in Section~\ref{sec6}).

\section{Interpolation with function parameter}\label{sec5}

The extended Sobolev scale possesses an important interpolation property, which we
will use. Namely, every space $H^{\varphi}(\Gamma)$, with $\varphi\in\mathrm{RO}$,
is the result of the interpolation (with an appropriate function parameter) between
the Sobolev spaces $H^{(s_0)}(\Gamma)$ and $H^{(s_1)}(\Gamma)$ appearing in
\eqref{f7}. (An analogous result holds for the spaces defined over
$\mathbb{R}^{n}$.) In this connection we recall the definition of interpolation with
function parameter in the case of general Hilbert spaces and then state some
properties of the interpolation (see \cite[Sec. 1.1]{MikhailetsMurach10}). It is
sufficient to restrict ourselves to separable complex Hilbert spaces.

Let $X:=[X_{0},X_{1}]$ be an ordered couple of separable complex Hilbert spaces such
that the continuous and dense embedding $X_{1}\hookrightarrow X_{0}$ holds. We say
that this couple is admissible. For $X$ there exists an isometric isomorphism
$J:X_{1}\leftrightarrow X_{0}$ such that $J$ is a self-adjoint positive operator on
$X_{0}$ with the domain $X_{1}$. The operator $J$ is called a generating operator
for the couple~$X$. This operator is uniquely determined by $X$.

Let $\psi\in\mathcal{B}$, where $\mathcal{B}$ is the set of all Borel measurable
functions $\psi:(0,\infty)\rightarrow(0,\infty)$ such that $\psi$ is bounded on each
compact interval $[a,b]$, with $0<a<b<\infty$, and that $1/\psi$ is bounded on every
set $[r,\infty)$, with $r>0$.

Consider the operator $\psi(J)$, which is defined (and positive) in $X_{0}$ as the
Borel function $\psi$ of $J$. Denote by $[X_{0},X_{1}]_{\psi}$ or simply by
$X_{\psi}$ the domain of the operator $\psi(J)$ endowed with the inner product
$(u_{1},u_{2})_{X_{\psi}}:=(\psi(J)u_{1},\psi(J)u_{2})_{X_{0}}$ and the
corresponding norm $\|u\|_{X_{\psi}}=\|\psi(J)u\|_{X_{0}}$. The space $X_{\psi}$ is
Hilbert and separable.

A function $\psi\in\mathcal{B}$ is called an interpolation parameter if the
following condition is fulfilled for all admissible couples $X=[X_{0},X_{1}]$ and
$Y=[Y_{0},Y_{1}]$ of Hilbert spaces and for an arbitrary linear mapping $T$ given on
$X_{0}$: if the restriction of $T$ to $X_{j}$ is a bounded operator
$T:X_{j}\rightarrow Y_{j}$ for each $j\in\{0,1\}$, then the restriction of $T$ to
$X_{\psi}$ is also a bounded operator $T:X_{\psi}\rightarrow Y_{\psi}$.

If $\psi$ is an interpolation parameter, then we say that the Hilbert space
$X_{\psi}$ is obtained by the interpolation of $X$ with the function parameter
$\psi$. In this case, the dense and continuous embeddings $X_{1}\hookrightarrow
X_{\psi}\hookrightarrow X_{0}$ hold.

Note that a function $\psi\in\mathcal{B}$ is an interpolation parameter if and only
if $\psi$ is pseudoconcave on a neighborhood of $+\infty$ (see \cite[Theorem
1.9]{MikhailetsMurach10}). The latter condition means that there exists a concave
function $\psi_{1}:(b,\infty)\rightarrow(0,\infty)$, with $b\gg1$, such that both
functions $\psi/\psi_{1}$ and $\psi_{1}/\psi$ are bounded on $(b,\infty)$.

The above-mentioned interpolation property of the extended Sobolev is stated in the
following way \cite[Theorems 2.18 and 2.22]{MikhailetsMurach10}.

\begin{proposition}\label{prop1}
Let a function $\varphi\in\mathrm{RO}$ and numbers $s_{0},s_{1}\in\mathbb{R}$ be
such that $s_{0}<\sigma_{0}(\varphi)$ and $s_{1}>\sigma_{1}(\varphi)$. Set
\begin{equation}\label{f11}
\psi(t):=
\begin{cases}
\;t^{{-s_0}/{(s_1-s_0)}}\,
\varphi\bigl(t^{1/{(s_1-s_0)}}\bigr)&\text{for}\quad t\geq1, \\
\;\varphi(1)&\text{for}\quad0<t<1.
\end{cases}
\end{equation}
Then $\psi\in\mathcal{B}$ is an interpolation parameter, and
\begin{gather*}
[H^{(s_0)}(\mathbb{R}^{n}),H^{(s_1)}(\mathbb{R}^{n})]_{\psi}=H^{\varphi}(\mathbb{R}^{n})
\quad \mbox{with equality of norms},\\
[H^{(s_0)}(\Gamma),H^{(s_1)}(\Gamma)]_{\psi}=H^{\varphi}(\Gamma)\quad \mbox{with
equivalence of norms}.
\end{gather*}
\end{proposition}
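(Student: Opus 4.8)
The plan is to realize the H\"ormander space $H^{\varphi}$ as an interpolation space between two Sobolev spaces and then verify that the function $\psi$ defined in \eqref{f11} does the job with the claimed norm equalities. I would proceed in three stages: first treat the model case over $\mathbb{R}^{n}$, where everything can be computed explicitly via the Fourier transform; then check that $\psi$ is genuinely an interpolation parameter using the pseudoconcavity criterion recalled in Section~\ref{sec5}; and finally transfer the result to the manifold $\Gamma$ by the standard localization/partition-of-unity machinery, which forces the passage from equality to equivalence of norms.

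\textbf{Stage one: the Euclidean case.} Fix $s_{0}<\sigma_{0}(\varphi)$ and $s_{1}>\sigma_{1}(\varphi)$, so that the embeddings in \eqref{f7} guarantee that $X:=[H^{(s_0)}(\mathbb{R}^{n}),H^{(s_1)}(\mathbb{R}^{n})]$ is an admissible couple. The generating operator $J$ for this couple acts on the Fourier side as multiplication by $\langle\xi\rangle^{s_1-s_0}$; more precisely, $J$ is the operator with $\widehat{Jw}(\xi)=\langle\xi\rangle^{s_1-s_0}\widehat{w}(\xi)$, which is positive and self-adjoint on $H^{(s_0)}(\mathbb{R}^{n})$ with domain $H^{(s_1)}(\mathbb{R}^{n})$. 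By the spectral calculus, $\psi(J)$ is then multiplication by $\psi(\langle\xi\rangle^{s_1-s_0})$ on the Fourier side. Substituting $t=\langle\xi\rangle^{s_1-s_0}$ into \eqref{f11} gives, for $\langle\xi\rangle\geq1$,
\begin{equation*}
\psi\bigl(\langle\xi\rangle^{s_1-s_0}\bigr)=
\langle\xi\rangle^{-s_0}\,\varphi\bigl(\langle\xi\rangle\bigr).
\end{equation*}
Hence the squared norm $\|\psi(J)w\|^{2}_{H^{(s_0)}(\mathbb{R}^{n})}$ equals
$\int\langle\xi\rangle^{2s_0}\,\langle\xi\rangle^{-2s_0}\,\varphi^{2}(\langle\xi\rangle)\,|\widehat{w}(\xi)|^{2}\,d\xi$, which is exactly $\|w\|^{2}_{H^{\varphi}(\mathbb{R}^{n})}$. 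This yields the first asserted identity with equality of norms, once one checks the domains coincide.

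\textbf{Stage two: $\psi$ is an interpolation parameter.} By the criterion quoted in Section~\ref{sec5}, it suffices to show that $\psi$ is pseudoconcave near $+\infty$. Writing $r:=1/(s_1-s_0)>0$, we have $\psi(t)=t^{-s_0 r}\varphi(t^{r})$ for $t\geq1$. Using the two-sided power bounds \eqref{f4} for $\varphi\in\mathrm{RO}$ together with the definition of the Matuszewska indices \eqref{eq6}--\eqref{eq7}, one sees that $\psi$ is, up to bounded factors, squeezed between two power functions $t^{\theta_0}$ and $t^{\theta_1}$ with exponents $\theta_j$ that lie strictly inside $(0,1)$ thanks to the strict inequalities $s_0<\sigma_0(\varphi)$ and $s_1>\sigma_1(\varphi)$; indeed the lower exponent is $r(\sigma_0(\varphi)-s_0)>0$ and the upper exponent is $r(\sigma_1(\varphi)-s_1)<1$. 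A function that is comparable to a power $t^{\theta}$ with $0<\theta<1$ admits a concave majorant/minorant, so $\psi$ is pseudoconcave. This is the step I expect to require the most care, since one must convert the multiplicative RO-estimates into a clean concavity-type statement and confirm the endpoint exponents fall in the open unit interval; the strictness of the index inequalities is exactly what makes this work.

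\textbf{Stage three: transfer to $\Gamma$.} The manifold statement follows from the Euclidean one by the usual covering argument. Using the local charts $\alpha_j$ and partition of unity $\chi_j$ defining $H^{\varphi}(\Gamma)$, one checks that the flattening and gluing operators are bounded between the corresponding Sobolev spaces on both endpoints $s_0$ and $s_1$; since $\psi$ is an interpolation parameter (Stage two), the interpolation property recalled in Section~\ref{sec5} propagates boundedness to the interpolation spaces, and a standard retraction/coretraction argument identifies $[H^{(s_0)}(\Gamma),H^{(s_1)}(\Gamma)]_{\psi}$ with $H^{\varphi}(\Gamma)$. Because the partition-of-unity norm on $H^{\varphi}(\Gamma)$ depends on the chosen atlas only up to equivalence, the manifold identity holds with equivalence rather than equality of norms, matching the statement of the Proposition.
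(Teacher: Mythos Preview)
The paper does not supply its own proof of Proposition~\ref{prop1}; the result is quoted from \cite[Theorems 2.18 and 2.22]{MikhailetsMurach10}. So there is no in-paper argument to compare against. That said, your three-stage outline is precisely the standard route taken in that reference: explicit Fourier-side computation on $\mathbb{R}^{n}$, verification that $\psi$ is an interpolation parameter, and transfer to $\Gamma$ by a flattening/sewing (retraction/coretraction) pair --- indeed the very operators $T$ and $K$ that the present paper later writes out in the proof of Lemma~\ref{lem3}.

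Your Stage two, however, contains a slip and a gap. The slip: the upper Matuszewska index of $\psi$ is $r\bigl(\sigma_{1}(\varphi)-s_{0}\bigr)$, not $r\bigl(\sigma_{1}(\varphi)-s_{1}\bigr)$; the latter is negative and tells you nothing. With the correct computation one gets $\sigma_{0}(\psi)=r\bigl(\sigma_{0}(\varphi)-s_{0}\bigr)>0$ and $\sigma_{1}(\psi)=r\bigl(\sigma_{1}(\varphi)-s_{0}\bigr)<r(s_{1}-s_{0})=1$, so both indices lie strictly in $(0,1)$, which is what is needed. The gap: being ``squeezed between two power functions $t^{\theta_{0}}$ and $t^{\theta_{1}}$'' with $0<\theta_{0}\leq\theta_{1}<1$ does \emph{not} by itself give pseudoconcavity, and $\psi$ is not in general comparable to a single power $t^{\theta}$ as your last sentence suggests. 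What is true, and what you must invoke, is that any function in $\mathrm{RO}$ whose Matuszewska indices both lie strictly inside $(0,1)$ is equivalent near $+\infty$ to a concave function; this is a standard fact from regular variation theory (see \cite{BinghamGoldieTeugels89}) and is exactly how \cite{MikhailetsMurach10} proceeds. Once that lemma is inserted, Stages one and three go through as you wrote them.
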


We will also use two properties of interpolation between abstract Hilbert spaces.
The first of them is the following estimate of the operator norm in interpolation
spaces \cite[Theorem 1.8]{MikhailetsMurach10}.

\begin{proposition}\label{prop2}
For every interpolation parameter $\psi\in\mathcal{B}$ there exists a number
$\widetilde{c}=\widetilde{c}(\psi)>0$ such that
$$
\|T\|_{X_{\psi}\rightarrow Y_{\psi}}\leq
\widetilde{c}\,\max\,\bigl\{\,\|T\|_{X_{j}\rightarrow Y_{j}}:\,j=0,\,1\,\bigr\}.
$$
Here $X=[X_{0},X_{1}]$ and $Y=[Y_{0},Y_{1}]$ are arbitrary normal admissible couples
of Hilbert spaces, and $T$ is an arbitrary linear mapping given on $X_{0}$ and
defining the bounded operators $T:X_{j}\rightarrow Y_{j}$, with  $j=0,\,1$. The
number $c_{\psi}>0$ does not depend on $X$, $Y$, and $T$.
\end{proposition}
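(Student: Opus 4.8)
The plan is to convert the asserted norm bound into a single operator inequality through the generating operators, and then to prove that inequality by replacing $\psi$ with an operator--monotone representative and applying the Hansen--Pedersen--Jensen operator inequality. First I would pass to the spectral picture. Let $J_X,J_Y$ be the generating operators of the normal admissible couples $X,Y$. Since the couples are normal, the embeddings $X_1\hookrightarrow X_0$ and $Y_1\hookrightarrow Y_0$ are contractions, whence $J_X\geq I$ and $J_Y\geq I$ and both spectra lie in $[1,\infty)$; this is exactly the feature that will make the constant uniform, as only the values of $\psi$ on $[1,\infty)$ enter. Using $\|u\|_{X_1}=\|J_Xu\|_{X_0}$, $\|u\|_{X_\psi}=\|\psi(J_X)u\|_{X_0}$ and the analogues for $Y$, and writing $M_j:=\|T\|_{X_j\to Y_j}$ and $M:=\max\{M_0,M_1\}$, the hypotheses become $T^{\ast}T\leq M_0^2\,I$ and $T^{\ast}J_Y^2T\leq M_1^2\,J_X^2$ as self-adjoint operators on $X_0$, while the claim $\|T\|_{X_\psi\to Y_\psi}\leq\widetilde c\,M$ is precisely $T^{\ast}\psi(J_Y)^2T\leq\widetilde c^{\,2}M^2\,\psi(J_X)^2$. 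Dividing $T$ by $M$ (the case $M=0$ being trivial), it suffices to treat the normalized situation $T^{\ast}T\leq I$ and $T^{\ast}J_Y^2T\leq J_X^2$.

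Next I would replace $\psi$ by an operator--monotone representative. Being an interpolation parameter, $\psi$ is pseudoconcave on a neighbourhood of $+\infty$; together with the two defining bounds of $\mathcal B$ and with $\psi$ bounded above and below on the fixed compact range $[1,b]$, this yields doubling estimates $c^{-1}\leq\psi(t)/\psi(\tau)\leq c\,(t/\tau)$ for $t\geq\tau\geq1$, with $c=c(\psi)$. Consequently $g(t):=\psi(\sqrt t)^2$ is quasiconcave on $[1,\infty)$ up to a constant depending only on $\psi$, hence comparable there to a nonnegative concave function; replacing the elementary profiles $\min\{1,t/s\}$ by the comparable profiles $t/(t+s)$ produces a function $F(t)=\alpha+\beta t+\int_{(0,\infty)}\frac{t}{t+s}\,d\mu(s)$ with $\alpha,\beta\geq0$ and $\mu\geq0$ such that $F\asymp g$ on $[1,\infty)$ with constants $\kappa=\kappa(\psi)$. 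Such an $F$ is operator monotone, hence operator concave, on $[0,\infty)$, with $F(0)=\alpha\geq0$. Setting $\psi_{\ast}(t):=\sqrt{F(t^2)}$ gives $\psi\asymp\psi_{\ast}$ on $[1,\infty)$, so by the spectral theorem $X_\psi=X_{\psi_{\ast}}$ and $Y_\psi=Y_{\psi_{\ast}}$ with norm equivalence constants $\kappa$; this reduces everything to $\psi_{\ast}$ and will contribute the factor $\widetilde c=\kappa^2$.

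It then remains to prove the sharp inequality $T^{\ast}\psi_{\ast}(J_Y)^2T\leq\psi_{\ast}(J_X)^2$, that is $T^{\ast}F(J_Y^2)T\leq F(J_X^2)$, in the normalized case. Here I would invoke the Hansen--Pedersen--Jensen inequality: for the operator concave $F$ with $F(0)\geq0$ and the contraction $T$ one has $T^{\ast}F(J_Y^2)T\leq F(T^{\ast}J_Y^2T)$; combining this with operator monotonicity of $F$ and the hypothesis $T^{\ast}J_Y^2T\leq J_X^2$ gives $F(T^{\ast}J_Y^2T)\leq F(J_X^2)$, and the two estimates chain to the required inequality. Undoing the comparison by $\kappa^2$ and the normalization by $M$ yields $\|T\|_{X_\psi\to Y_\psi}\leq\kappa^2 M$, with $\widetilde c=\kappa^2$ depending only on $\psi$ and not on $X$, $Y$, or $T$.

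I expect the main obstacle to be the middle step: manufacturing the operator--monotone representative $\psi_{\ast}$ with comparison constants depending only on $\psi$. The delicate points are that the natural monotone object is $\psi(\sqrt{\cdot}\,)^2$ rather than $\psi$; that pseudoconcavity is assumed only near $+\infty$, so the bounded range $[1,b]$ must be absorbed through the class-$\mathcal B$ bounds and glued to the concave profile (this is exactly where normality of the couples, confining the spectra to $[1,\infty)$, is indispensable, since without it the estimate fails); and that the passage from a concave profile to a genuinely operator--monotone $F$ must be carried out uniformly via the integral representation. Once $\psi_{\ast}$ is in hand, the operator--inequality argument through Hansen--Pedersen--Jensen is routine.
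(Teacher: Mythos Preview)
The paper does not prove Proposition~\ref{prop2}: it is quoted verbatim as a known result, with the citation \cite[Theorem~1.8]{MikhailetsMurach10}, and no argument is given. So there is no ``paper's own proof'' against which to compare your proposal.

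That said, your strategy is a legitimate route to the result and is organized correctly around the one place where uniformity in $X,Y,T$ could fail. Recasting the boundedness assumptions as the operator inequalities $T^{\ast}T\le M_0^2 I$ and $T^{\ast}J_Y^{2}T\le M_1^{2}J_X^{2}$, reducing to the normalized case, and then invoking an operator--Jensen inequality for an operator--monotone replacement $F$ of $t\mapsto\psi(\sqrt t)^{2}$ is sound in outline; the comparison $\min\{1,t/s\}\asymp t/(t+s)$ indeed converts a concave majorant into a L\"owner--type integral with only absolute constants. Two points deserve care when you write this out in full. First, $J_X$ and $J_Y$ are unbounded, so the Hansen--Pedersen--Jensen inequality must be applied in its form for unbounded positive operators (or via a spectral cutoff and a limiting argument), and you should track that $T^{\ast}J_Y^{2}T$ is a priori only a quadratic form; the inequality $F(T^{\ast}J_Y^{2}T)\le F(J_X^{2})$ needs the self-adjoint realization of that form. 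Second, the passage from ``pseudoconcave near $+\infty$ plus the $\mathcal{B}$-bounds on $[1,b]$'' to the two-sided estimate $c^{-1}\le\psi(t)/\psi(\tau)\le c\,t/\tau$ for $t\ge\tau\ge1$ requires a short argument (a positive concave function need not be monotone), and the extension of the comparison from $[1,\infty)$ to $[0,\infty)$ so that $F$ is genuinely operator monotone on the whole half-line should be made explicit. None of this is a gap in the plan; they are exactly the ``delicate points'' you already flagged.
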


Recall here that an admissible couple of Hilbert spaces $X=[X_{0},X_{1}]$ is said to
be normal if $\|u\|_{X_{0}}\leq\|u\|_{X_{1}}$ for each $u\in X_{1}$. Note that each
admissible couple $[X_{0},X_{1}]$ can be transformed into a normal couple by
replacing the norm $\|\,u\,\|_{X_{0}}$ with the proportional norm
$k\,\|u\|_{X_{0}}$, where $k$ is the norm of the embedding operator
$X_{1}\hookrightarrow X_{0}$.

The second property is useful when we interpolate between direct sums of Hilbert
spaces.

\begin{proposition}\label{prop3}
Let $\bigl[X_{0}^{(j)},X_{1}^{(j)}\bigr]$, with $j=1,\ldots,p$, be a finite
collection of admissible couples of Hilbert spaces. Then for every function
$\psi\in\mathcal{B}$ we have
$$
\biggl[\,\bigoplus_{j=1}^{p}X_{0}^{(j)},\,\bigoplus_{j=1}^{p}X_{1}^{(j)}\biggr]_{\psi}=\,
\bigoplus_{j=1}^{p}\bigl[X_{0}^{(j)},\,X_{1}^{(j)}\bigr]_{\psi}\quad\mbox{with
equality of norms}.
$$
\end{proposition}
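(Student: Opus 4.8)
The plan is to reduce the assertion to an explicit description of the generating operator of the direct-sum couple in terms of the generating operators of the individual couples, after which the claim follows from the fact that the Borel functional calculus commutes with finite orthogonal sums. Write $X_{0}:=\bigoplus_{j=1}^{p}X_{0}^{(j)}$ and $X_{1}:=\bigoplus_{j=1}^{p}X_{1}^{(j)}$. Since each embedding $X_{1}^{(j)}\hookrightarrow X_{0}^{(j)}$ is continuous and dense, so is $X_{1}\hookrightarrow X_{0}$; hence $[X_{0},X_{1}]$ is itself an admissible couple, and its interpolation space $[X_{0},X_{1}]_{\psi}$ is defined for every $\psi\in\mathcal{B}$. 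Let $J_{j}$ denote the generating operator of $\bigl[X_{0}^{(j)},X_{1}^{(j)}\bigr]$, and set $J:=\bigoplus_{j=1}^{p}J_{j}$, acting componentwise with $\mathrm{Dom}\,J=X_{1}$.

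First I would verify that $J$ is the generating operator of $[X_{0},X_{1}]$. As a finite orthogonal sum of self-adjoint positive operators, $J$ is self-adjoint and positive on $X_{0}$ with domain $X_{1}$. Moreover, for $u=(u_{1},\dots,u_{p})\in X_{1}$ we have
\[
\|Ju\|_{X_{0}}^{2}=\sum_{j=1}^{p}\|J_{j}u_{j}\|_{X_{0}^{(j)}}^{2}
=\sum_{j=1}^{p}\|u_{j}\|_{X_{1}^{(j)}}^{2}=\|u\|_{X_{1}}^{2},
\]
since each $J_{j}:X_{1}^{(j)}\leftrightarrow X_{0}^{(j)}$ is an isometric isomorphism; thus $J:X_{1}\leftrightarrow X_{0}$ is an isometric isomorphism as well. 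As the generating operator of an admissible couple is uniquely determined, $J$ is indeed it.

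Next I would compute $\psi(J)$. Each $J_{j}$ is boundedly invertible on $X_{0}^{(j)}$ (its inverse being the bounded map $J_{j}^{-1}:X_{0}^{(j)}\to X_{1}^{(j)}$ followed by the continuous embedding $X_{1}^{(j)}\hookrightarrow X_{0}^{(j)}$), so $\sigma(J_{j})\subset[\varepsilon_{j},\infty)$ for some $\varepsilon_{j}>0$; as $p$ is finite, $\sigma(J)\subset[\varepsilon,\infty)$ with $\varepsilon:=\min_{j}\varepsilon_{j}>0$, and therefore the Borel function $\psi\in\mathcal{B}$ of $J$ is well defined. Because the spectral projections of a finite orthogonal sum decompose as the orthogonal sums of the spectral projections of the summands, the Borel functional calculus respects the decomposition, which gives $\psi(J)=\bigoplus_{j=1}^{p}\psi(J_{j})$ with $\mathrm{Dom}\,\psi(J)=\bigoplus_{j=1}^{p}\mathrm{Dom}\,\psi(J_{j})$ (the finiteness of $p$ removing any summability restriction on the domain).

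Finally, for $u=(u_{1},\dots,u_{p})$ in this common domain the definition of the interpolation norm yields
\[
\|u\|_{X_{\psi}}^{2}=\|\psi(J)u\|_{X_{0}}^{2}
=\sum_{j=1}^{p}\|\psi(J_{j})u_{j}\|_{X_{0}^{(j)}}^{2}
=\sum_{j=1}^{p}\|u_{j}\|_{[X_{0}^{(j)},X_{1}^{(j)}]_{\psi}}^{2},
\]
which is exactly the square of the norm of $u$ in $\bigoplus_{j=1}^{p}\bigl[X_{0}^{(j)},X_{1}^{(j)}\bigr]_{\psi}$. This establishes the asserted equality of both the spaces and their norms. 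I expect the only delicate point to be the justification that the functional calculus commutes with the finite orthogonal sum; this is a standard consequence of the corresponding decomposition of the spectral measure, and it is precisely what yields equality of norms (rather than mere equivalence) for \emph{every} $\psi\in\mathcal{B}$, with no appeal to $\psi$ being an interpolation parameter.
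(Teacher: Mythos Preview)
Your argument is correct: identifying the generating operator of the direct-sum couple as $J=\bigoplus_{j}J_{j}$ and then invoking the decomposition of the spectral measure to get $\psi(J)=\bigoplus_{j}\psi(J_{j})$ with $\mathrm{Dom}\,\psi(J)=\bigoplus_{j}\mathrm{Dom}\,\psi(J_{j})$ is exactly the right mechanism, and it yields equality of norms as stated. Note, however, that the paper does not supply its own proof of this proposition; it is quoted (together with Propositions~\ref{prop1} and~\ref{prop2}) from the monograph \cite{MikhailetsMurach10} as a standard property of interpolation with function parameter, so there is no in-paper argument to compare against. Your proof is the natural one and would serve as the omitted justification; the remark about $\sigma(J_{j})\subset[\varepsilon_{j},\infty)$ is harmless but not strictly needed, since the paper's convention already takes $J$ positive and invertible, placing the spectrum in $(0,\infty)$ where $\psi$ is defined.
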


\section{Some auxiliary results}\label{sec6}

Here we will prove some auxiliary results regarding the boundedness of the PsDO
$A(\lambda)$ on the extended Sobolev scale.

\begin{lemma}\label{lem1}
Let $T\in\Psi_{\mathrm{ph}}^{r}(\Gamma)$ for some $r\in\mathbb{R}$. Then the PsDO T
defines the bounded operator
$$
T:H^{\varphi\varrho^{r}}(\Gamma)\rightarrow H^{\varphi}(\Gamma)\quad\mbox{for
each}\quad\varphi\in\mathrm{RO}.
$$
\end{lemma}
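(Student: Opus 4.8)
The plan is to reduce the statement for a general RO-varying parameter $\varphi$ to the
already-known Sobolev case by means of interpolation with a function parameter, exactly
in the spirit of Proposition~\ref{prop1}. First I would recall the result for Sobolev
spaces: since $T\in\Psi_{\mathrm{ph}}^{r}(\Gamma)$ is a classical PsDO of order $r$ on a
closed manifold, it defines a bounded operator $T:H^{(s+r)}(\Gamma)\to H^{(s)}(\Gamma)$
for every $s\in\mathbb{R}$. (This is the standard mapping property of PsDOs on the
Sobolev scale; here $H^{(s+r)}(\Gamma)=H^{\varrho^{s}\varrho^{r}}(\Gamma)$ in the
notation of the paper, which is the Sobolev instance of $H^{\varphi\varrho^{r}}(\Gamma)$.)

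Next, fix $\varphi\in\mathrm{RO}$ and choose real numbers $s_{0}<\sigma_{0}(\varphi)$
and $s_{1}>\sigma_{1}(\varphi)$. Then $\varphi\varrho^{r}\in\mathrm{RO}$ as well, with
$\sigma_{0}(\varphi\varrho^{r})=\sigma_{0}(\varphi)+r$ and
$\sigma_{1}(\varphi\varrho^{r})=\sigma_{1}(\varphi)+r$, so the same exponents shifted by
$r$, namely $s_{0}+r$ and $s_{1}+r$, bracket $\varphi\varrho^{r}$. Let $\psi\in\mathcal{B}$
be the interpolation parameter associated to $\varphi$, $s_{0}$, $s_{1}$ via
\eqref{f11}. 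The key observation is that \emph{the same} $\psi$ is the interpolation
parameter associated to $\varphi\varrho^{r}$ with the shifted exponents $s_{0}+r$,
$s_{1}+r$: substituting $\varphi\varrho^{r}$ and $s_{j}+r$ into \eqref{f11} gives
$t^{-(s_0+r)/(s_1-s_0)}\varphi(t^{1/(s_1-s_0)})\,t^{r/(s_1-s_0)}
=t^{-s_0/(s_1-s_0)}\varphi(t^{1/(s_1-s_0)})$, i.e. the exponent shift cancels because
$(s_1+r)-(s_0+r)=s_1-s_0$. Hence by Proposition~\ref{prop1},
\[
[H^{(s_0)}(\Gamma),H^{(s_1)}(\Gamma)]_{\psi}=H^{\varphi}(\Gamma),\qquad
[H^{(s_0+r)}(\Gamma),H^{(s_1+r)}(\Gamma)]_{\psi}=H^{\varphi\varrho^{r}}(\Gamma),
\]
both with equivalence of norms.

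Now apply the interpolation property of $\psi$ (the defining property recalled in
Section~\ref{sec5}) to the linear mapping $T$, using the admissible couples
$X=[H^{(s_0+r)}(\Gamma),H^{(s_1+r)}(\Gamma)]$ and $Y=[H^{(s_0)}(\Gamma),H^{(s_1)}(\Gamma)]$:
since $T:X_{j}\to Y_{j}$ is bounded for $j=0,1$ by the Sobolev case, the restriction
$T:X_{\psi}\to Y_{\psi}$ is bounded, which is precisely the asserted boundedness of
$T:H^{\varphi\varrho^{r}}(\Gamma)\to H^{\varphi}(\Gamma)$. One small technical point
to dispatch first is that $T$ is a single linear map defined on all of
$\mathcal{D}'(\Gamma)$ (hence on $X_{0}$), so the hypotheses of the interpolation
property genuinely apply; this is immediate since PsDOs act continuously on
$\mathcal{D}'(\Gamma)$.

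The only mildly delicate step is the verification that $\psi$ from \eqref{f11}
simultaneously serves as the interpolation parameter for both pairs $(\varphi,s_0,s_1)$
and $(\varphi\varrho^{r},s_0+r,s_1+r)$; once the exponent-cancellation identity above is
written out this is routine, and the boundedness of $\psi$ on compact intervals and of
$1/\psi$ near $+\infty$ (membership in $\mathcal{B}$) is already guaranteed by
Proposition~\ref{prop1}. No new obstacle arises; the whole proof is a one-line
interpolation argument once the matching of parameters is observed.
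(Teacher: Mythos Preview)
Your proposal is correct and follows essentially the same approach as the paper's own proof: both invoke the known Sobolev boundedness of $T:H^{(s_j+r)}(\Gamma)\to H^{(s_j)}(\Gamma)$ for $j=0,1$, and then interpolate with the function parameter $\psi$ from Proposition~\ref{prop1}. Your explicit verification that the same $\psi$ serves for the shifted couple $(\varphi\varrho^{r},s_0+r,s_1+r)$ is exactly the point the paper records in its closing ``Note that the first equality is true here because\ldots''.
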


\begin{proof}
This lemma is known in the Sobolev case \cite[Theorem 2.1.2]{Agranovich94}. We prove
the lemma for arbitrary $\varphi\in\mathrm{RO}$ by applying Proposition~\ref{prop1}.
Choose numbers $s_{0}$ and $s_{1}$ so that $s_{0}<\sigma_{0}(\varphi)$ and
$s_{1}>\sigma_{1}(\varphi)$. Let $\psi$ be the interpolation parameter appearing in
Proposition~\ref{prop1}. Consider the bounded operators
\begin{equation}\label{f13}
T:\,H^{(s_{j}+r)}(\Gamma)\rightarrow H^{(s_{j})}(\Gamma),\quad j=0,1,
\end{equation}
which map between Sobolev spaces. Applying the interpolation with the function
para\-meter $\psi$ to \eqref{f13}, we obtain, by Proposition~\ref{prop1}, the
bounded operator required
\begin{gather*}
T:\,H^{\varphi\varrho^{r}}(\Gamma)=
\bigl[H^{(s_0+r)}(\Gamma),H^{(s_1+r)}(\Gamma)\bigr]_\psi
\rightarrow\bigl[H^{(s_0)}(\Gamma),H^{(s_1)}(\Gamma)\bigr]_\psi=H^{\varphi}(\Gamma).
\end{gather*}
Note that the first equality is true here because
$s_{0}+r<\sigma_{0}(\varphi\rho^{r})$, $s_{1}+r>\sigma_{1}(\varphi\rho^{r})$, and
$\psi$ satisfies formula \eqref{f11}, in which $s_0$, $s_1$, and $\varphi$ should be
replaced with $s_0+r$, $s_1+r$, and $\varphi\rho^{r}$ respectively.
\end{proof}

According to Lemma~\ref{lem1}, the operator \eqref{f8} is well-defined and bounded
for each $\lambda\in\mathbb{C}$ and $\varphi\in\mathrm{RO}$. The next lemma refines
this result.

\begin{lemma}\label{lem2}
For an arbitrary $\varphi\in\mathrm{RO}$ there exists a number $c'=c'(\varphi)>0$
such that
\begin{equation}\label{f14}
\|A(\lambda)u\|_{\varphi}\leq c'\bigl(\,\|u\|_{\varphi\varrho^{mq}}+
|\lambda|^{q}\,\|\,u\|_{\varphi}\,\bigr)
\end{equation}
for every $\lambda\in\mathbb{C}$ and each $u\in H^{\varphi\varrho^{mq}}(\Gamma)$.
Here $c'$ does not depend on $\lambda$ and~$u$.
\end{lemma}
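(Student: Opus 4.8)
The plan is to reduce the estimate to the Sobolev case and then lift it to the extended scale by interpolation with a function parameter, exactly as in the proof of Lemma~\ref{lem1}. First I would decompose $A(\lambda)=\sum_{j=0}^{q}\lambda^{q-j}A_{j}$ and estimate each summand separately. Since $A_{j}\in\Psi_{\mathrm{ph}}^{mj}(\Gamma)$, Lemma~\ref{lem1} gives a bounded operator $A_{j}:H^{\varphi\varrho^{mj}}(\Gamma)\to H^{\varphi}(\Gamma)$, hence a constant $c_{j}=c_{j}(\varphi)$ with $\|A_{j}u\|_{\varphi}\leq c_{j}\|u\|_{\varphi\varrho^{mj}}$. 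Thus
\begin{equation*}
\|A(\lambda)u\|_{\varphi}\leq\sum_{j=0}^{q}|\lambda|^{q-j}\,c_{j}\,\|u\|_{\varphi\varrho^{mj}}.
\end{equation*}
The task then becomes to bound each mixed term $|\lambda|^{q-j}\|u\|_{\varphi\varrho^{mj}}$ by a constant times $\bigl(\|u\|_{\varphi\varrho^{mq}}+|\lambda|^{q}\|u\|_{\varphi}\bigr)$, uniformly in $\lambda$.

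The key step is an interpolation-type inequality on the extended Sobolev scale: for $0\le j\le q$ there is $c=c(\varphi,j)$ with
\begin{equation*}
\|u\|_{\varphi\varrho^{mj}}\leq c\Bigl(\varepsilon^{q-j}\|u\|_{\varphi\varrho^{mq}}+\varepsilon^{-j}\|u\|_{\varphi}\Bigr)\quad\text{for all }\varepsilon>0.
\end{equation*}
Taking $\varepsilon=|\lambda|^{-1}$ (for $|\lambda|\ge 1$; the range $|\lambda|\le1$ is trivial since then $\varphi\varrho^{mj}\le\mathrm{const}\cdot\varphi\varrho^{mq}$ gives boundedness directly) turns the right side into $c\bigl(|\lambda|^{j-q}\|u\|_{\varphi\varrho^{mq}}+|\lambda|^{j}\|u\|_{\varphi}\bigr)$, and multiplying by $|\lambda|^{q-j}$ yields precisely $c\bigl(\|u\|_{\varphi\varrho^{mq}}+|\lambda|^{q}\|u\|_{\varphi}\bigr)$. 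To prove the displayed inequality I would use that multiplication by $\varphi(\langle\xi\rangle)\langle\xi\rangle^{mj}$ in the Fourier image is, pointwise in $\xi$, controlled by $\varepsilon^{q-j}\varphi(\langle\xi\rangle)\langle\xi\rangle^{mq}+\varepsilon^{-j}\varphi(\langle\xi\rangle)$: indeed, writing $\theta=\langle\xi\rangle^{m}$, one has $\theta^{j}\le \varepsilon^{q-j}\theta^{q}+\varepsilon^{-j}$ for every $\varepsilon>0$ by the weighted arithmetic–geometric mean inequality $\theta^{j}=(\varepsilon\theta)^{j}\varepsilon^{-j}\le\bigl(\tfrac{j}{q}(\varepsilon\theta)^{q}+\tfrac{q-j}{q}\bigr)\varepsilon^{-j}$. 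This estimate on $\mathbb{R}^{n}$ transfers to $\Gamma$ through the local charts and partition of unity defining $\|\cdot\|_{\varphi}$, since each coordinate piece $(\chi_{k}u)\circ\alpha_{k}$ is estimated by the same pointwise bound, and $(\chi_{k}u)\circ\alpha_{k}\in H^{\varphi\varrho^{mj}}(\mathbb{R}^{n})$ follows from $u\in H^{\varphi\varrho^{mq}}(\Gamma)$ together with the embedding $\varphi\varrho^{mq}/(\varphi\varrho^{mj})$ bounded near $+\infty$.

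Summing the $q+1$ resulting inequalities and absorbing all the constants into a single $c'=c'(\varphi)$ gives \eqref{f14}; the constant manifestly does not depend on $\lambda$ or $u$, since the $\lambda$-dependence was entirely exhausted by the choice $\varepsilon=|\lambda|^{-1}$ and the $\lambda$-independent constants $c_{j}$ from Lemma~\ref{lem1}. The main obstacle is the transfer of the pointwise Fourier-multiplier estimate from $\mathbb{R}^{n}$ to $\Gamma$ while keeping the constant independent of $\lambda$; once one observes that the norm on $\Gamma$ is built from finitely many $\mathbb{R}^{n}$-norms and that the pointwise inequality in $\xi$ is uniform, this is routine, but it is the place where care is needed. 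An alternative, cleaner route avoiding any manifold bookkeeping is to apply Proposition~\ref{prop1}: establish \eqref{f14} in the Sobolev case $\varphi=\varrho^{s}$ (which is \cite[Theorem 2.1.2]{Agranovich94} applied to each $A_{j}$ together with the scalar inequality above), and then interpolate between the two Sobolev estimates for $s=s_{0}<\sigma_{0}(\varphi)$ and $s=s_{1}>\sigma_{1}(\varphi)$ using Proposition~\ref{prop2} to keep track of the operator norm; for this one views $u\mapsto A(\lambda)u$ and the pair of norms $\|u\|_{\varphi\varrho^{mq}}$, $|\lambda|^{q}\|u\|_{\varphi}$ as operators on suitable product couples, so that the $\lambda$-independence of the interpolation constant $\widetilde c(\psi)$ in Proposition~\ref{prop2} delivers a $\lambda$-independent $c'(\varphi)$.
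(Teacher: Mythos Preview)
Your proof is correct and follows essentially the same route as the paper: decompose $A(\lambda)$ via \eqref{f1}, apply Lemma~\ref{lem1} to each $A_{j}$, then use a pointwise Fourier-side interpolation inequality on $\mathbb{R}^{n}$ (transferred to $\Gamma$ through the charts) to control $|\lambda|^{q-j}\|u\|_{\varphi\varrho^{mj}}$ by $\|u\|_{\varphi\varrho^{mq}}+|\lambda|^{q}\|u\|_{\varphi}$. The only cosmetic difference is that the paper parametrizes the elementary inequality as $1\leq(k/r)^{\varepsilon}+(r/k)^{\delta}$ and takes $r=|\lambda|^{1/m}$, which works for all $r\geq0$ and so avoids your case split $|\lambda|\gtrless1$; your AM--GM version and the alternative interpolation route are both fine but unnecessary.
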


\begin{proof}
We will use the following interpolation inequality:
\begin{equation}\label{f15}
r^{\varepsilon}\,\|u\|_{\eta}\leq \sqrt{2}\,\bigl(\,\|u\|_{\eta\varrho^\varepsilon}+
r^{\varepsilon+\delta}\,\|u\|_{\eta\varrho^{-\delta}}\bigr),
\end{equation}
where the number parameters $r,\varepsilon,\delta\geq0$, function parameter
$\eta\in\mathrm{RO}$ and distribution $u\in H^{\eta\varrho^\varepsilon}(\Gamma)$ are
all arbitrary.  (Similar inequalities are known for Sobolev spaces; see, e.g.,
\cite[\S~1, Sec.~6]{AgranovichVishik64}.)

Formula \eqref{f15} follows from the evident inequality
$1\leq(k/r)^{\varepsilon}+(r/k)^{\delta}$ for all positive numbers $r$ and $k$.
Indeed, if we put $k:=\langle\xi\rangle$ in this inequality and multiply its sides
by $r^{\varepsilon}\eta(\langle\xi\rangle)\,|\widehat{w}(\xi)|$, where
$\xi\in\mathbb{R}^{n}$ and $w\in H^{\eta\varrho^\varepsilon}(\mathbb{R}^{n})$ are
arbitrary, then we obtain an analog of \eqref{f15} for spaces over $\mathbb{R}^{n}$.
Namely, we may write the following:
\begin{gather*}
r^{\varepsilon}\,\|w\|_{H^{\eta}(\mathbb{R}^{n})}=
\|\,r^{\varepsilon}\eta(\langle\xi\rangle)\,
|\widehat{w}(\xi)|\,\|_{L_{2}(\mathbb{R}^{n},d\xi)}\\
\leq\|\,\eta(\langle\xi\rangle)\langle\xi\rangle^{\varepsilon}
|\widehat{w}(\xi)|\,\|_{L_{2}(\mathbb{R}^{n},d\xi)}+
\|\,r^{\varepsilon+\delta}\eta(\langle\xi\rangle)\langle\xi\rangle^{-\delta}
|\widehat{w}(\xi)|\,\|_{L_{2}(\mathbb{R}^{n},d\xi)}\\
=\|w\|_{H^{\eta\varrho^\varepsilon}(\mathbb{R}^{n})}+
r^{\varepsilon+\delta}\,\|w\|_{H^{\eta\varrho^{-\delta}}(\mathbb{R}^{n})}.
\end{gather*}
Here, as usual, $L_{2}(\mathbb{R}^{n},d\xi)$ denotes the Hilbert space of functions
square integrable over $\mathbb{R}^{n}$ with respect to the Lebesgue measure $d\xi$,
where $\xi$ is their argument. Whence we directly obtain \eqref{f15} according to
the definition of the spaces over $\Gamma$. Certainly, we should use the same
collection of local charts and partition of unity for these spaces.

Now let $\varphi\in\mathrm{RO}$ be chosen arbitrarily. Then for each
$\lambda\in\mathbb{C}$ and $u\in H^{\varphi\varrho^{mq}}(\Gamma)$, we may write
\begin{gather*}
\|A(\lambda)u\|_{\varphi}\leq\sum_{j=0}^{q}\,|\lambda|^{q-j}\,\|A_{j}u\|_{\varphi}\leq
c_{1}\sum_{j=0}^{q}\,|\lambda|^{q-j}\,\|u\|_{\varphi\varrho^{mj}}\\
\leq c_{1}\,\sqrt{2}\,\bigl(\,\|u\|_{\varphi\varrho^{mq}}+
|\lambda|^{q}\,\|\,u\|_{\varphi}\,\bigr).
\end{gather*}
Here we apply \eqref{f1}, Lemma~\ref{lem1}, and \eqref{f15} in succession. According
to Lemma~\ref{lem1}, the number $c_{1}>0$ is independent of both $\lambda$ and $u$
in these inequalities. Note that we use \eqref{f15} for $\eta:=\varphi\varrho^{mj}$,
$\varepsilon:=m(q-j)$, $\delta:=mj$ and $r:=|\lambda|^{1/m}$, with $j=0,\ldots,q$.
Thus, we have the required inequality \eqref{f14} with $c':=c_{1}\,\sqrt{2}$.
\end{proof}

\section{Proof of the main result}\label{sec7}

Our proof of Theorem is based on an interpolation property of some
parameter--dependent spaces. Therefore we will first introduce these spaces,
establish this property, and then prove Theorem.

Let a function $\eta\in\mathrm{RO}$ and numbers $r,\theta\geq0$ be given. We let
$H^{\eta}(\Gamma,r,\theta)$ denote the space $H^{\eta}(\Gamma)$ which is endowed
with the norm depending on the parameters $r$ and $\theta$ in the following way
$$
\|u\|_{\eta,r,\theta}:=\bigl(\;\|u\|_{\eta}^{2}+r^{2}\,
\|u\|_{\eta\varrho^{-\theta}}^{2}\bigr)^{1/2},\quad u\in H^{\eta}(\Gamma).
$$
The space $H^{\eta}(\Gamma,r,\theta)$ is well-defined, and the norms in
$H^{\eta}(\Gamma,r,\theta)$ and $H^{\eta}(\Gamma)$ are equivalent. This directly
follows from the continuous embedding $H^{\eta}(\Gamma)\hookrightarrow
H^{\eta\varrho^{-\theta}}(\Gamma)$.  Note that the norm in the space
$H^{\eta}(\Gamma,r,\theta)$ is induced by the inner product
$$
(u_{1},u_{2})_{\eta,r,\theta}:=(u_{1},u_{2})_{\eta}+
r^{2}\,(u_{1},u_{2})_{\eta\varrho^{-\theta}},\quad u_{1},u_{2}\in H^{\eta}(\Gamma);
$$
therefore this space is Hilbert. If we consider the Sobolev case where
$\eta(t)\equiv t^{s}$ for some $s\in\mathbb{R}$, then the space
$H^{\eta}(\Gamma,r,\theta)$ are denoted by $H^{(s)}(\Gamma,r,\theta)$.

Returning to Theorem, note that
\begin{equation}\label{f7.14}
\|u\|_{\varphi\varrho^{mq},|\lambda|^{q},mq}\leq
(\,\|u\|_{\varphi\varrho^{mq}}+|\lambda|^{q}\,\|\,u\|_{\varphi}\,\bigr)\leq
\sqrt{2}\;\|u\|_{\varphi\varrho^{mq},|\lambda|^{q},mq}
\end{equation}
for each $u\in H^{\varphi\varrho^{mq}}(\Gamma)$.

According to Proposition~\ref{prop1}, the spaces
$$
\bigl[\,H^{(l_0)}(\Gamma,r,\theta),\,H^{(l_1)}(\Gamma,r,\theta)\,\bigr]_{\psi}
\quad\mbox{and}\quad H^{\eta}(\Gamma,r,\theta)
$$
are equal up to equivalence of norms. Here both the numbers $l_{0}<\sigma_{0}(\eta)$
and $l_{1}>\sigma_{1}(\eta)$ are arbitrary, whereas the interpolation parameter
$\psi$ is defined by the formula
\begin{equation}\label{f7.15}
\psi(t):=
\begin{cases}
\;t^{{-l_0}/{(l_1-l_0)}}\,
\eta\bigl(t^{1/{(l_1-l_0)}}\bigr)&\text{for}\quad t\geq1, \\
\;\eta(1)&\text{for}\quad0<t<1.
\end{cases}
\end{equation}
We now refine this result in the following way.

\begin{lemma}\label{lem3}
Let a function $\eta\in\mathrm{RO}$ and numbers $l_{0}<\sigma_{0}(\eta)$,
$l_{1}>\sigma_{1}(\eta)$, $\theta\geq0$ be all chosen arbitrarily. Then there exist
a number $c_{0}\geq1$ such that
\begin{equation}\label{f7.16}
c_{0}^{-1}\,\|u\|_{\eta,r,\theta}\leq
\|u\|_{[H^{(l_0)}(\Gamma,r,\theta),H^{(l_1)}(\Gamma,r,\theta)]_{\psi}}\leq
c_{0}\,\|u\|_{\eta,r,\theta}
\end{equation}
for every number $r\geq0$ and each distribution $u\in H^{\eta}(\Gamma)$. Here $\psi$
is the interpolation parameter defined by \eqref{f7.15}, and the number $c_{0}$ does
not depend on $r$ and $u$.
\end{lemma}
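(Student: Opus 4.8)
The plan is to deduce the parameter-uniform norm equivalence \eqref{f7.16} from the (already established) parameter-free equivalence of $\bigl[H^{(l_0)}(\Gamma,r,\theta),H^{(l_1)}(\Gamma,r,\theta)\bigr]_\psi$ with $H^\eta(\Gamma,r,\theta)$, the point being to extract a constant $c_0$ independent of $r$. The natural device is Proposition~\ref{prop2}: if I normalize the couple $X=\bigl[H^{(l_0)}(\Gamma,r,\theta),H^{(l_1)}(\Gamma,r,\theta)\bigr]$ and apply the interpolation-norm estimate to the identity map $T=\mathrm{id}$ viewed as an operator $X\to Y$ and also as $Y\to X$ for a suitable comparison couple $Y$, I get two-sided bounds with a constant depending only on $\psi$ (hence only on $\eta,l_0,l_1$), not on $r$. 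So the whole issue is choosing the right comparison couple and checking that the relevant operator norms between the Sobolev-level spaces are bounded uniformly in $r$.

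Concretely, first I would reduce to $\mathbb{R}^n$: since all the spaces over $\Gamma$ are glued from the spaces over $\mathbb{R}^n$ via a fixed atlas and partition of unity, and Proposition~\ref{prop3} commutes interpolation with finite direct sums, it suffices to prove the analogue of \eqref{f7.16} for the spaces $H^{(l)}(\mathbb{R}^n,r,\theta)$, where everything is explicit on the Fourier side. There the space $H^{(l)}(\mathbb{R}^n,r,\theta)$ is exactly $L_2(\mathbb{R}^n)$ with the weight $\bigl(\langle\xi\rangle^{2l}+r^2\langle\xi\rangle^{2l-2\theta}\bigr)^{1/2}=\langle\xi\rangle^{l}\bigl(1+r^2\langle\xi\rangle^{-2\theta}\bigr)^{1/2}$. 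Writing $\omega_r(\xi):=(1+r^2\langle\xi\rangle^{-2\theta})^{1/2}$, the couple $\bigl[H^{(l_0)}(\mathbb{R}^n,r,\theta),H^{(l_1)}(\mathbb{R}^n,r,\theta)\bigr]$ is obtained from the standard couple $[H^{(l_0)}(\mathbb{R}^n),H^{(l_1)}(\mathbb{R}^n)]$ by multiplying both Fourier weights by the same factor $\omega_r(\xi)$. The generating operator of the modified couple is then $\omega_r$ times (a fixed power of) the generating operator of the unmodified couple — more precisely it is the operator of multiplication by $\langle\xi\rangle^{(l_1-l_0)}$ on the Fourier side, unchanged by $\omega_r$ because the weight factor cancels in the ratio defining $J$. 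Hence $\psi(J)$ is literally the same operator in both cases, and the interpolation space $[\,\cdot\,,\,\cdot\,]_\psi$ picks up precisely the extra scalar weight $\omega_r(\xi)\varphi$-type factor; a direct computation with Fourier transforms then gives \eqref{f7.16} on $\mathbb{R}^n$ with constant $c_0=1$ (equality of norms), or with a $\psi$-dependent constant once one normalizes the couples as required by Proposition~\ref{prop2}.

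I expect the main obstacle to be bookkeeping rather than anything deep: I must verify carefully that the generating operator of the couple $\bigl[H^{(l_0)}(\mathbb{R}^n,r,\theta),H^{(l_1)}(\mathbb{R}^n,r,\theta)\bigr]$ is genuinely $r$-independent on the Fourier side (this is where the cancellation of the common factor $\omega_r$ happens, and it is the crux), and then that $\psi$ applied to it reconstructs the $H^\eta(\mathbb{R}^n,r,\theta)$-weight up to the uniform constant. After that, transferring back to $\Gamma$ is routine: the localization operators $u\mapsto((\chi_j u)\circ\alpha_j)_{j=1}^p$ and a right inverse built from a second partition of unity are bounded between the corresponding $r$-dependent Sobolev spaces with norms bounded uniformly in $r$ (because the extra weight $\omega_r$ is the same on both sides and commutes past multiplication by $\chi_j$ up to lower-order, $r$-independent errors absorbed by the embedding $H^{(l)}(\Gamma)\hookrightarrow H^{(l-\theta)}(\Gamma)$), so Proposition~\ref{prop2} together with Proposition~\ref{prop3} yields \eqref{f7.16} over $\Gamma$ with a constant $c_0$ depending only on $\eta$, $l_0$, $l_1$, $\theta$, the atlas and the partition of unity, but not on $r$ or $u$.
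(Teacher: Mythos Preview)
Your proposal is correct and follows essentially the same approach as the paper: first establish the result on $\mathbb{R}^n$ with $c_0=1$ by observing that the generating operator for the couple $[H^{(l_0)}(\mathbb{R}^n,r,\theta),H^{(l_1)}(\mathbb{R}^n,r,\theta)]$ is multiplication by $\langle\xi\rangle^{l_1-l_0}$ on the Fourier side (the common weight $\omega_r$ cancels), then transfer to $\Gamma$ via the flattening map $T$ and a sewing map $K$ with $KT=I$, invoking Propositions~\ref{prop2} and~\ref{prop3} to keep constants independent of $r$.

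One small simplification: your remark about $\omega_r$ ``commuting past multiplication by $\chi_j$ up to lower-order, $r$-independent errors'' is unnecessary. Because the $r$-dependent norm has the form $\bigl(\|\cdot\|_{\eta}^{2}+r^{2}\|\cdot\|_{\eta\varrho^{-\theta}}^{2}\bigr)^{1/2}$, any linear map bounded on both $H^{\eta}$ and $H^{\eta\varrho^{-\theta}}$ (with $r$-free constants) is automatically bounded on $H^{\eta}(\cdot,r,\theta)$ uniformly in $r$; in particular $T$ is isometric and $K$ has norm at most the maximum of its norms on the two component spaces. No commutator estimate is needed.
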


\begin{proof}
Let a number $r\geq0$ be arbitrary. We will first prove the following: if we replace
$\Gamma$ with $\mathbb{R}^{n}$ in formula \eqref{f7.16}, then it holds for
$c_{0}=1$.

Let $H^{\eta}(\mathbb{R}^{n},r,\theta)$ denote the space $H^{\eta}(\mathbb{R}^{n})$
endowed with the Hilbert norm
\begin{gather}\notag
\|w\|_{H^{\eta}(\mathbb{R}^{n},r,\theta)}:=
\Bigl(\;\|w\|_{H^{\eta}(\mathbb{R}^{n})}^{2}+ r^{2}\,\|w\|
_{H^{\eta\varrho^{-\theta}}(\mathbb{R}^{n})}^{2}\:\Bigr)^{1/2}\\=
\biggl(\;\;\int_{\mathbb{R}^{n}}\,
\bigl(1+r^{2}\,\langle\xi\rangle^{-2\theta}\bigr)\,
\eta^{2}(\langle\xi\rangle)\,|\widehat{w}(\xi)|^{2}\,d\xi\,\biggr)^{1/2};\label{f7.17}
\end{gather}
here $w\in H^{\eta}(\mathbb{R}^{n})$. This norm is equivalent to the norm in
$H^{\eta}(\mathbb{R}^{n})$ for every fixed $r\geq0$. Hence, the space
$H^{\eta}(\mathbb{R}^{n},r,\theta)$ is Hilbert. If $\eta(t)\equiv t^{s}$ for some
$s\in\mathbb{R}$ (the Sobolev case), then the space
$H^{\eta}(\mathbb{R}^{n},r,\theta)$ is denoted by
$H^{(s)}(\mathbb{R}^{n},r,\theta)$.

Calculate the norm in the interpolation space
\begin{equation}\label{f7.18}
\bigl[\,H^{(l_{0})}(\mathbb{R}^{n},r,\theta),\,H^{(l_{1})}(\mathbb{R}^{n},r,\theta)
\,\bigr]_{\psi}.
\end{equation}
Let $J$ denote the PsDO in $\mathbb{R}^{n}$ with the symbol
$\langle\xi\rangle^{l_{1}-l_{0}}$, where $\xi\in\mathbb{R}^{n}$ is argument. We may
verify directly that $J$ is the generating operator for the couple of spaces
appearing in \eqref{f7.18}. Applying the isometric isomorphism
$$
\mathcal{F}:H^{(l_{0})}(\mathbb{R}^{n},r,\theta)\leftrightarrow
L_{2}\bigl(\mathbb{R}^{n}, (1+r^{2}\,\langle\xi\rangle^{-2\theta})\,
\langle\xi\rangle^{2l_{0}}\,d\xi\bigr),
$$
we reduce the operator $J$ to the form of multiplication by the function
$\langle\xi\rangle^{l_{1}-l_{0}}$; here $\mathcal{F}$ is the Fourier transform.
Therefore the operator $\psi(J)$ is reduced to the form of multiplication by the
function $\psi(\langle\xi\rangle^{l_{1}-l_{0}})=
\langle\xi\rangle^{-l_{0}}\eta(\langle\xi\rangle)$ in view of \eqref{f7.15}. Hence,
given any $w\in H^{\eta}(\mathbb{R}^{n})$, we have
\begin{gather*}
\|w\|^{2}_{[H^{(l_{0})}(\mathbb{R}^{n},r,\theta),
H^{(l_{1})}(\mathbb{R}^{n},r,\theta)]_{\psi}}=
\|\psi(J)\,w\|_{H^{(l_{0})}(\mathbb{R}^{n},r,\theta)}^{2}\\
=\int_{\mathbb{R}^{n}}\, \bigl|\langle\xi\rangle^{-l_{0}}\,\eta(\langle\xi\rangle)\,
\widehat{w}(\xi)\bigr|^{2}\,\bigl(1+r^{2}\,\langle\xi\rangle^{-2\theta}\bigr)\,
\langle\xi\rangle^{2l_{0}}\,d\xi
=\|w\|_{H^{\eta}(\mathbb{R}^{n},r,\theta)}^{2}<\infty;
\end{gather*}
here \eqref{f7.17} is used. Thus
\begin{equation}\label{f7.19}
\bigl[\,H^{(l_{0})}(\mathbb{R}^{n},r,\theta),\,H^{(l_{1})}(\mathbb{R}^{n},r,\theta)
\,\bigr]_{\psi}=H^{\eta}(\mathbb{R}^{n},r,\theta)\quad\mbox{with equality of norms}.
\end{equation}

We will now prove \eqref{f7.16} by applying property \eqref{f7.19} and the
definition of spaces over~$\Gamma$. Fix a finite atlas $\{\alpha_{j}\}$ and
partition of unity $\{\chi_{j}\}$ on $\Gamma$ used in this definition (see
Sec\-tion~\ref{sec3}); here $j=1,\ldots,p$.

Consider the linear mapping of the "rectification" of $\Gamma$, namely,
$$
T:\,u\mapsto\bigl((\chi_{1}u)\circ\alpha_{1},\ldots,
(\chi_{p}u)\circ\alpha_{p}\bigr),\quad u\in\mathcal{D}'(\Gamma).
$$
We may directly verify that this mapping defines the isometric operators
\begin{gather}\label{f7.20}
T:\,H^{\eta}(\Gamma,r,\theta)\rightarrow
\bigl(H^{\eta}(\mathbb{R}^{n},r,\theta)\bigr)^{p}, \\
T:\,H^{(l_{j})}(\Gamma,r,\theta)\rightarrow
\bigl(H^{(l_{j})}(\mathbb{R}^{n},r,\theta)\bigr)^{p}, \quad j\in\{0,\,1\}.
\label{f7.21}
\end{gather}
Applying the interpolation with the parameter $\psi$ to \eqref{f7.21}, we obtain the
bounded operator
\begin{equation}\label{f7.22}
T:\,\bigl[H^{(l_{0})}(\Gamma,r,\theta),\,
H^{(l_{1})}(\Gamma,r,\theta)\,\bigr]_{\psi}\\
\rightarrow\bigl[\bigl(H^{(l_{0})}(\mathbb{R}^{n},r,\theta)\bigl)^{p},\,
\bigl(H^{(l_{1})}(\mathbb{R}^{n},r,\theta)\bigl)^{p}\,\bigr]_{\psi}.
\end{equation}
Here the couples of spaces are normal. Therefore, according to Proposition
\ref{prop2}, the norm of the operator \eqref{f7.22} does not exceed a certain number
$\widetilde{c}=\widetilde{c}(\psi)>0$, which is independent of the parameter $r$,
specifically. Whence, by Proposition \ref{prop3} and property \eqref{f7.19}, we
obtain the bounded operator
\begin{equation}\label{f7.23}
T:\,\bigl[H^{(l_{0})}(\Gamma,r,\theta),\,
H^{(l_{1})}(\Gamma,r,\theta)\,\bigr]_{\psi}\\
\rightarrow\bigl(H^{\eta}(\mathbb{R}^{n},r,\theta)\bigr)^{p}, \quad\mbox{whose
norm}\;\leq\widetilde{c}.
\end{equation}

Along with $T$, consider the linear mapping of "sewing"
$$
K:\,(w_{1},\ldots,w_{p})\mapsto
\sum_{j=1}^{p}\Theta_{j}\bigl((\eta_{j}w_{j})\circ\alpha_{j}^{-1}\bigr),
$$
where $w_{1},\ldots,w_{p}$ are distributions defined in $\mathbb{R}^{n}$. Here the
function $\eta_{j}\in C^{\infty}(\mathbb{R}^{n})$ is equal to $1$ on the set
$\alpha_{j}^{-1}(\mathrm{supp}\,\chi_{j})$ and is compactly supported, whereas
$\Theta_{j}$ denotes the operator of extension by zero from $\Gamma_{j}$ onto
$\Gamma$. We have the bounded operators
\begin{gather}\label{f7.24}
K:\,\bigl(H^{(s)}(\mathbb{R}^{n})\bigr)^{p}\rightarrow H^{(s)}(\Gamma)\quad
\mbox{for each}\quad s\in\mathbb{R},\\
K:\,\bigl(H^{\varphi}(\mathbb{R}^{n})\bigr)^{p}\rightarrow H^{\varphi}(\Gamma)\quad
\mbox{for each}\quad\varphi\in\mathrm{RO}. \label{f7.25}
\end{gather}

Note that the boundedness of the operator \eqref{f7.24} is a known property of
Sobolev spaces (see, e.g., \cite[Sec. 2.6]{Hermander63} or \cite[p.
86]{MikhailetsMurach10}). The boundedness of the operator \eqref{f7.25} follows from
this property with the help of interpolation. Namely, let $\varphi$, $s_{0}$,
$s_{1}$, and $\psi$ be the same as that in Proposition~\ref{prop1}. Then applying
the interpolation with the function parameter $\psi$ to \eqref{f7.24} with
$s\in\{s_{0},s_{1}\}$, we get the boundedness of the operator \eqref{f7.25} by
virtue of Propositions \ref{prop1} and~\ref{prop3}.

Let $c_{1}$ be the maximum of the norms of the operators \eqref{f7.24} and
\eqref{f7.25}, where $s\in\{l_{0},l_{0}-\theta,l_{1},l_{1}-\theta\}$ and
$\varphi\in\{\eta,\eta\varrho^{-\theta}\}$. The number $c_{1}>0$ does not depend on
the parameter $r$. We may directly verify that the norms of the operators
\begin{gather}\label{f7.26}
K:\,\bigl(H^{\eta}(\mathbb{R}^{n},r,\theta)\bigr)^{p}\rightarrow
H^{\eta}(\Gamma,r,\theta),\\
K:\,\bigl(H^{(l_{j})}(\mathbb{R}^{n},r,\theta)\bigr)^{p}\rightarrow
H^{(l_{j})}(\Gamma,r,\theta),\quad j=0,\,1, \label{f7.27}
\end{gather}
does not exceed the number $c_{1}$. Applying the interpolation with the parameter
$\psi$ to \eqref{f7.27}, we obtain the bounded operator
\begin{equation}\label{f7.28}
K:\,\bigl[\,\bigl(H^{(l_{0})}(\mathbb{R}^{n},r,\theta)\bigl)^{p},\,
\bigl(H^{(l_{1})}(\mathbb{R}^{n},r,\theta)\bigr)^{p}\,\bigr]_{\psi}\rightarrow
\bigl[\,H^{(l_{0})}(\Gamma,r,\theta),\,H^{(l_{1})}(\Gamma,r,\theta)\,\bigr]_{\psi}.
\end{equation}
Its norm does not exceed $\widetilde{c}\,c_{1}$ in view of Proposition \ref{prop2}
(note that both couples of spaces are normal in \eqref{f7.28}). Whence, by
\eqref{f7.19} and Proposition \ref{prop3}, we obtain the bounded operator
\begin{gather}\label{f7.29}
K:\,\bigl(H^{\eta}(\mathbb{R}^{n},r,\theta)\bigr)^{p}\rightarrow
\bigl[\,H^{(l_{0})}(\Gamma,r,\theta),\,H^{(l_{1})}(\Gamma,r,\theta)\,\bigr]_{\psi},
\quad\mbox{whose norm}\;\leq \widetilde{c}\,c_{1}.
\end{gather}

By the choice of the functions $\chi_{j}$ and $\eta_{j}$, we may write
\begin{gather*}
KTu=\sum_{j=1}^{p}\,\Theta_{j}\bigl((\eta_{j}\,
((\chi_{j}u)\circ\alpha_{j}))\circ\alpha_{j}^{-1}\bigr)\\=
\sum_{j=1}^{p}\,\Theta_{j}\bigl((\chi_{j}u)\circ\alpha_{j}\circ\alpha_{j}^{-1}\bigr)=
\sum_{j=1}^{p}\,\chi_{j}u=u,
\end{gather*}
that is $KTu=u$ for each $u\in\mathcal{D}'(\Gamma)$. Therefore, multiplying
\eqref{f7.29} by the isometric operator \eqref{f7.20}, we obtain the bounded
identity operator
$$
I=KT:\,H^{\eta}(\Gamma,r,\theta)\rightarrow
\bigl[\,H^{(l_{0})}(\Gamma,r,\theta),\,H^{(l_{1})}(\Gamma,r,\theta)\,\bigr]_{\psi},
$$
whose norm $\leq\widetilde{c}\,c_{1}$. Besides, taking the product of the operators
\eqref{f7.26} and \eqref{f7.23}  (the norm of \eqref{f7.26} does not exceed
$c_{1}$), we get another bounded identity operator
$$
I=KT:\,\bigl[\,H^{(l_{0})}(\Gamma,r,\theta),\,H^{(l_{1})}(\Gamma,r,\theta)\,\bigr]_{\psi}
\rightarrow H^{\eta}(\Gamma,r,\theta),
$$
whose norm $\leq\widetilde{c}\,c_{1}$. These identity operators yield the required
estimate \eqref{f7.16}, where the number $c_{0}:=\widetilde{c}\,c_{1}\geq1$ does not
depend on $r\geq0$ and $u\in H^{\eta}(\Gamma)$..
\end{proof}

Now, applying Lemma \ref{lem3}, we may give

\begin{proof}[Proof of Theorem]
As has been mentioned in Section \ref{sec4}, Theorem is known in the case of Sobolev
inner--product spaces. By using parameter-depended spaces introduced above, we may
reformulate Theorem for the Sobolev scale in the following way. There exists a
number $\lambda_{0}>0$ such that the isomorphism
\begin{equation}\label{f31}
A(\lambda):\,H^{s+mq}(\Gamma,|\lambda|^{q},mq)\,\leftrightarrow\,H^{s}(\Gamma)
\end{equation}
holds for each $s\in\mathbb{R}$ and $\lambda\in K$ with $|\lambda|\geq\lambda_{0}$.
Moreover, the norms of the operator \eqref{f31} and its inverse are uniformly
bounded with respect to $\lambda$.

Let $\varphi\in\mathrm{RO}$, choose numbers $s_{0}<\sigma_{0}(\varphi)$ and
$s_{1}>\sigma_{1}(\varphi)$, and define the interpolation parameter $\psi$ by
\eqref{f11}. Applying the interpolation with this parameter to \eqref{f31} for
$s\in\{s_{0},s_{1}\}$, we obtain the isomorphism
\begin{equation}\label{f32}
A(\lambda):\,\bigl[\,H^{s_{0}+mq}(\Gamma,|\lambda|^{q},mq),\,
H^{s_{1}+mq}(\Gamma,|\lambda|^{q},mq)\,\bigr]_{\psi}\,\leftrightarrow\,
\bigl[\,H^{s_{0}}(\Gamma),\,H^{s_{1}}(\Gamma)\,\bigr]_{\psi}.
\end{equation}
According to Proposition \ref{prop2}, the norms of the operator \eqref{f32} and its
inverse are uniformly bounded with respect to $\lambda$. Now, by Lemma \ref{lem3}
and Proposition \ref{prop1}, we draw a conclusion that \eqref{f32} yields the
isomorphism
\begin{equation}\label{f33}
A(\lambda):\,H^{\varphi\varrho^{mq}}(\Gamma,|\lambda|^{q},mq)\leftrightarrow
H^{\varphi}(\Gamma)
\end{equation}
such that the norms of the operator \eqref{f33} and its inverse are uniformly
bounded with respect to $\lambda$. Here we apply Lemma \ref{lem3} for
$\eta:=\varphi\varrho^{mq}$, $l_{0}:=s_{0}+mq<\sigma_{0}(\eta)$,
$l_{1}:=s_{1}+mq>\sigma_{1}(\eta)$, $\theta:=mq$, and $r:=|\lambda|^{q}$, and we
also note that $\psi$ satisfies \eqref{f7.15}. The isomorphism \eqref{f33} and the
norms property just proved mean in view of \eqref {f7.14} that Theorem is true.
\end{proof}


\begin{thebibliography}{1}

\bibitem{Agmon62}
S.~Agmon, \textit{On the eigenfunctions and on the eigenvalues of general elliptic
boundary value problems}, Comm. Pure Appl. Math. \textbf{15} (1962), no.~2,
119--147.

\bibitem{AgmonNirenberg63}
S. Agmon, L. Nirenberg, \textit{Properties of solutions of ordinary
dif\-fe\-ren\-tial equations in Banach space}, Comm. Pure Appl. Math. \textbf{16}
(1963), no.~2, 121--239.

\bibitem{AgranovichVishik64}
M. S. Agranovich, M. I. Vishik, \textit{Elliptic problems with parameter and
parabolic problems of general form}, Russian Math. Surveys \textbf{19} (1964),
no.~3, 53--157.

\bibitem{Agranovich90}
M. S. Agranovich, \textit{Non-self-adjoint problems with a parameter that are
elliptic in the sence of Agmon-Douglis-Nirenberg}, Funct. Anal. Appl. \textbf{24}
(1990), no.~1, 50--53.

\bibitem{Agranovich92}
M. S. Agranovich, \textit{Moduli of eigenvalues of nonselfadjoint problems with a
parameter that are elliptic in the Agmon-Douglis-Nirenberg sense}, Funct. Anal.
Appl. \textbf{26} (1992), no.~2, 116--119.

\bibitem{Grubb96}
G. Grubb, \textit{Functional Calculas of Pseudo-Differential Boundary Problems},
2-nd edn, Birkh\"aser, Boston, 1996.

\bibitem{Kozhevnikov73}
A. N. Kozhevnikov, \textit{Spectral problems for pseudodifferential systems elliptic
in the Douglis-Nirenberg sense, and their applications}, Sb. Math. \textbf{21}
(1973), no.~1, 63--90.

\bibitem{Kozhevnikov96}
A. N. Kozhevnikov, \textit{Asymptotics of the spectrum of the Douglis-Nirenberg
elliptic operators on a compact manifold}, Math. Nachr. \textbf{182} (1996), no.~1,
261--293.

\bibitem{Kozhevnikov97}
A. N. Kozhevnikov, \textit{Parameter-ellipticity for mixed-order elliptic boundary
problems}, C.~R. Math. Acad. Sci. Paris \textbf{324} (1997), no.~12, 1361--1366.

\bibitem{Kozhevnikov01}
A. N. Kozhevnikov, \textit{Parameter-ellipticity for mixed order systems in the
sense of Petrovskii}, Commun. Appl. Anal. \textbf{5} (2001), no.~2, 277--291.

\bibitem{DenkMennickenVolevich98}
R. Denk, R. Mennicken, L. Volevich, \textit{The Newton polygon and elliptic problems
with parameter}, Math. Nachr. \textbf{192} (1998), no.~1, 125--157.

\bibitem{DenkMennickenVolevich01}
R. Denk, R. Mennicken, and L. Volevich, \textit{On elliptic operator pensils with
general boundary conditions}, Integral Equations Operator Theory \textbf{39} (2001),
no.~1, 15--40.

\bibitem{DenkFairman10}
R. Denk, M. Fairman, \textit{Estimates for solutions of a parameter-elliptic
multi-order system of differential equations}, Integral Equations Ope\-ra\-tor
Theory \textbf{66} (2010), no.~3, 327--365.

\bibitem{Agranovich94}
M. S. Agranovich, \textit{Elliptic operators on closed manifolds}, Encyclopaedia
Math. Sci., Springer, Berlin, vol.~63, 1994, pp.~1--130.

\bibitem{Agranovich97}
M. S. Agranovich, \textit{Elliptic boundary problems}, Encyclopaedia Math. Sci.,
Springer, Berlin, vol.~79, 1997, pp.~1--144.

\bibitem{Hermander63}
L. H\"ormander, \textit{Linear Partial Differential Operators}, Springer, Berlin,
1963.

\bibitem{09Dop3}
V. A. Mikhailets, A. A. Murach, \textit{On elliptic operators on a closed compact
manifold}, Dopov. Nats. Acad. Nauk. Ukr. Mat. Prirodozn. Tehn. Nauki (2009), no.~3,
29--35 (Russian).

\bibitem{MikhailetsMurach10}
V. A. Mikhailets, A. A. Murach, \textit{H\"ormander spaces, interpolation, and
elliptic problems}, Institute of Mathematics of NAS of Ukraine, Kiev, 2010
(Russian); available on arXiv:1106.3214.

\bibitem{09UMJ3}
A. A. Murach, \textit{On elliptic systems in H\"ormander spaces}, Ukrainian Math.~J.
\textbf{61} (2009), no.~3, 467--477.

\bibitem{12UMJ11}
T. N. Zinchenko, A. A. Murach, \textit{Douglis--Nirenberg elliptic systems in
H\"ormander spaces}, Ukrainian Math.~J. \textbf{64} (2012), no.~11.

\bibitem{06UMJ3}
V. A. Mikhailets and A. A. Murach, \textit{Improved scales of spaces and elliptic
boundary-value problems. II}, Ukrainian Math. J. \textbf{58} (2006), no.~3,
398--417.

\bibitem{07UMJ5}
V.A. Mikhailets and A.A. Murach, \textit{Refined scales of spaces and elliptic
boundary-value problems. III},  Ukrainian Math. J. \textbf{59} (2007), no.~5,
744--765.

\bibitem{06UMJ11}
V. A. Mikhailets and A. A. Murach, \textit{Regular elliptic boundary-value problem
for homogeneous equation in two-sided refined scale of spaces}, Ukrainian Math. J.
\textbf{58} (2006), no.~11, 1748--1767.

\bibitem{06UMB4}
V. A. Mikhailets, A. A. Murach,  \textit{Elliptic operator with homogeneous regular
boundary conditions in two-sided refined scale of spaces}, Ukr. Math. Bull.
\textbf{3} (2006), no.~4, 529--560.

\bibitem{07UMJ6}
A. A. Murach, \textit{Elliptic pseudo-differential operators in a refined scale of
spaces on a closed manifold}, Ukrainian Math. J. \textbf{59} (2007), no.~6,
874--893.

\bibitem{08MFAT2}
A. A. Murach, \textit{Douglis-Nirenberg elliptic systems in the refined scale of
spaces on a closed manifold}, Methods Funct. Anal. Topology \textbf{14} (2008),
no.~2, 142--158.


\bibitem{08UMJ4}
V. A. Mikhailets, A. A. Murach, \textit{An elliptic boundary-value problem in a
two-sided refined scale of spaces}, Ukrainian. Math. J. \textbf{60} (2008), no.~4,
574--597.

\bibitem{08BPAS3}
V. A. Mikhailets, A. A. Murach, \textit{Elliptic systems of pseudodifferential
equations in a refined scale on a closed manifold}, Bull. Pol. Acad. Sci. Math.
\textbf{56} (2008), no. 3--4, 213--224.

\bibitem{12BJMA2}
V. A. Mikhailets, A. A. Murach, \textit{The refined Sobolev scale,
inter\-po\-la\-tion, and elliptic problems}, Banach J. Math. Anal. \textbf{6}
(2012), no.~2, 211--281.

\bibitem{Paneah00}
B. Paneah, \textit{The Oblique Derivative Problem. The Poincar\'e Problem},
Wiley--VCH, Berlin, 2000.

\bibitem{Triebel01}
H. Triebel, \textit{The Structure of Functions}, Birkh\"aser, Basel, 2001.

\bibitem{Jacob010205}
N. Jacob, \textit{Pseudodifferential Operators and Markov Processes}, in 3 volumes,
Imperial College Press, London, 2001, 2002, 2005.

\bibitem{NicolaRodino10}
F. Nicola, L. Rodino, \textit{Global Pseudodifferential Calculas on Eucli\-dean
Spaces}, Birkh\"aser, Basel, 2010.

\bibitem{Avakumovic36}
V. G. Avakumovi\'c, \textit{O jednom O-inverznom stavu}, Rad Jugoslovenske Akad.
Znatn. Umjetnosti \textbf{254} (1936), 167--186.

\bibitem{Seneta76}
E. Seneta, \textit{Regularly varying functions}, Springer, Berlin, 1976.

\bibitem{BinghamGoldieTeugels89}
N. H. Bingham, C. M. Goldie, J. L. Teugels, \textit{Regular variation}, Cambridge
University Press, Cambridge, 1989.

\bibitem{Matuszewska64}
W. Matuszewska, \textit{On a generalization of regularly increasing functions},
Studia Math. \textbf{24} (1964), 271--279.



\bibitem{Hermander83}
L. H\"ormander, \textit{The Analysis of Linear Partial Differential Operators},
vol.~2, Springer, Berlin, 1983.

\bibitem{VolevichPaneah65}
L. R. Volevich, B. P. Paneah, \textit{Certain spaces of generalized functions and
embedding theorems}, Russian Math. Surveys \textbf{20} (1965), no.~1, 1--73.


\end{thebibliography}
\end{document}